\theoremstyle{plain}
\newtheorem{theorem}{Theorem}[section]
\newtheorem{proposition}{Proposition}[section]
\newtheorem{corollary}{Corollary}[section]
\newtheorem{remark}{\bf Remark}[section]
\theoremstyle{definition}
\newcommand{\rot}{\mathop\mathrm{rot}}
\newcommand{\grad}{\mathop\mathrm{grad}}
\renewcommand{\div}{\mathop\mathrm{div}}
\newcommand{\Tr}{\mathop\mathrm{Tr}}
\def\({\left(}
\def\){\right)}
\def\rot{\operatorname{rot}}
\def\Tr{\operatorname{Tr}}
\begin{document}

 \title[Interpolation inequalities on the sphere]
 {On a class of interpolation inequalities on the 2D sphere}

\author[ A. Ilyin, 
and  S. Zelik] {
Alexei Ilyin${}^1$,
and Sergey
Zelik${}^{2,3}$}

\keywords{Gagliardo--Nirenberg inequalities, sphere, orthonormal systems}

\email{ilyin@keldysh.ru}
\email{s.zelik@surrey.ac.uk}
\address{${}^1$ Keldysh Institute of Applied Mathematics, Moscow, Russia}
\address{${}^2$ University of Surrey, Department of Mathematics, Guildford, GU2 7XH, United Kingdom.}
\address{${}^3$ \phantom{e}School of Mathematics and Statistics, Lanzhou University, Lanzhou\\ 730000,
P.R. China}

\begin{abstract}
We prove   estimates for the $L^p$-norms of systems of
functions and divergence free vector functions that are orthonormal in the Sobolev space  $H^1$ on the 2D sphere.
As a corollary, order sharp constants in the embedding $H^1\hookrightarrow L^q$, $q<\infty$,
are obtained in the  Gagliardo--Nirenberg interpolation inequalities.
\end{abstract}

\maketitle

\section{Introduction}
The following interpolation inequality
holds on the sphere $\mathbb{S}^d$ (see~\cite{Beckner} and also \cite{Veron}):
\begin{equation}\label{1}
\frac{q-2}d\int_{\mathbb{S}^d}|\nabla\varphi|^2d\mu+
\int_{\mathbb{S}^d}|\varphi|^2d\mu\ge
\left(\int_{\mathbb{S}^d}|\varphi|^qd\mu\right)^{2/q}.
\end{equation}
Here $d\mu$ is the normalized  Lebesgue measure
on $\mathbb{S}^d$:
$$
d\mu=\frac{d\sigma}{\sigma_d}=\frac{d\sigma}{\frac{2\pi^{\frac{d+1}2}}{\Gamma(\frac{d+1}2)}}\,,
$$
so that $\mu(\mathbb{S}^d)=1$ (the gradient is calculated with respect to the natural
metric). Next, $q\in[2, \infty)$ for $d=1,2$,
and $q\in[2, 2d/(d-2)]$ for $d\ge 3$.
The remarkable fact about~\eqref{1} is that the constant $(q-2)/d$ is sharp
for all admissible  $q$.
The inequality clearly degenerates and  turns into equality on constants. The fact that
the constant $(q-2)/d$ is sharp is verified by means of the sequence
$\varphi_\varepsilon(s)=1+\varepsilon v(s)$ as $\varepsilon\to0$,
where $v(s)$ is an eigenfunction of the Laplacian on $\mathbb{S}^d$
corresponding to the first positive eigenvalue $d$, see \cite{DEKL} and the references therein.

However, in applications (for instance, for the Navier--Stokes equations
on the 2D sphere) the functions $\varphi$ usually play the role
of stream functions of a divergence free vector functions $u$, $u=\nabla^\perp\varphi$,
and  therefore
without loss of generality $\varphi$
can be chosen to be orthogonal to constants.

In this work we consider the two-dimensional sphere $\mathbb{S}^2$
only and are interested in writing the Sobolev embedding
$H^1(\mathbb{S}^2)\hookrightarrow L^q(\mathbb{S}^2)$ as a
multiplicative inequality of Gagliardo--Nirenberg type
involving the $L^2$-norms of $\varphi$ and $\nabla\varphi$ on right-hand side:
$\|\varphi\|_{L^2(\mathbb{S}^2)}=:\|\varphi\|$ and
$\|\nabla \varphi\|_{L^2(\mathbb{S}^2)}=:\|\nabla\varphi\|$.

It is also well known that in the case of $\mathbb{R}^d$
 interpolation inequalities in the additive form and in the multiplicative form are
 equivalent and the passage from the former to the latter
 is realized by the introduction of the parameter $m$ in the inequality
 (by scaling $x\to mx$) and
subsequent minimization with respect to $m$.  To go other way round
one can use Young's inequality (with parameter) for products
to obtain the interpolation inequality in the additive form.

This scheme obviously  does not work on a manifold
 due to the lack of scaling.
One possible way to introduce a parameter in the Sobolev inequality
is to consider the Sobolev space $H^1$ with norm and scalar product
$$
\|\varphi\|_{H^1}^2:=m^2\|\varphi\|^2+\|\nabla\varphi\|^2,\quad
(\varphi_1,\varphi_2)_{H^1}:=m^2(\varphi_1,\varphi_2)+
(\nabla\varphi_1,\nabla\varphi_2)
$$
depending on a parameter $m>0$, and then to trace down the explicit dependence of
the embedding constant on $m$. In this work this is done in much
more general framework of the inequalities for $H^1$-orthonormal
families proved in \cite{LiebJFA}.

We can now state and discuss our main result.

\begin{theorem}\label{Th:intro}
Let a family of zero mean functions
$\{\varphi_j\}_{j=1}^n\in\dot{H}^1(\mathbb{S}^2)$ be orthonormal
with respect to the scalar product
\begin{equation}\label{orth-m}
m^2(\varphi_i,\varphi_j)+(\nabla\varphi_i,\nabla\varphi_j)=\delta_{ij}.
\end{equation}
Then for $1\le p<\infty$ the function
$$
\rho(x):=\sum_{j=1}^n|\varphi_j(x)|^2
$$
satisfies the inequality
\begin{equation}\label{Liebd2}
\|\rho\|_{L^p}\le\mathrm{B}_pm^{-2/p}n^{1/p},
\end{equation}
where
\begin{equation}\label{Liebbp}
\mathrm{B}_p\le\left(\frac{p-1}{4\pi}\right)^{(p-1)/p}.
\end{equation}
\end{theorem}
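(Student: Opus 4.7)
The plan is to follow the duality/spectral strategy of Lieb \cite{LiebJFA}: dualize the $L^p$-bound on $\rho$, reformulate it as a Schatten-class estimate for $K_V:=(m^2-\Delta)^{-1/2}V(m^2-\Delta)^{-1/2}$, and reduce the latter to a sharp Kato--Seiler--Simon-type inequality on $\mathbb{S}^2$.

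First I would dualize:
$$\|\rho\|_{L^p}=\sup\Bigl\{\int_{\mathbb{S}^2}\rho V\,d\mu:V\ge 0,\ \|V\|_{L^{p'}}\le 1\Bigr\},\qquad 1/p+1/p'=1.$$
Setting $H:=m^2-\Delta$ on zero-mean functions and $\psi_j:=H^{1/2}\varphi_j$, the orthonormality \eqref{orth-m} becomes the $L^2$-orthonormality of $\{\psi_j\}$ and $\varphi_j=H^{-1/2}\psi_j$, so
$$\int_{\mathbb{S}^2}\rho V\,d\mu=\sum_{j=1}^n\langle V\varphi_j,\varphi_j\rangle=\sum_{j=1}^n\langle K_V\psi_j,\psi_j\rangle=\Tr(PK_V),$$
where $P$ is the rank-$n$ orthogonal projection onto $\mathrm{span}\{\psi_j\}$. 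Since $K_V\ge 0$, Hölder applied to its eigenvalue sequence yields $\Tr(PK_V)\le n^{1/p}\|K_V\|_{S^{p'}}$, and the factorisation $K_V=(V^{1/2}H^{-1/2})^*(V^{1/2}H^{-1/2})$ gives the identity $\|K_V\|_{S^{p'}}=\|V^{1/2}H^{-1/2}\|_{S^{2p'}}^2$, shifting the task to bounding the Schatten-$2p'$ norm of $V^{1/2}H^{-1/2}$.

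The heart of the proof is then the Kato--Seiler--Simon-type estimate
$$\|V^{1/2}(m^2-\Delta)^{-1/2}\|_{S^{2p'}}^{2p'}\le \frac{m^{2-2p'}}{4\pi(p'-1)}\|V\|_{L^{p'}}^{p'}.$$
My plan here is to establish a sphere-version of KSS, schematically of the form
$$\|f(x)g(-\Delta)\|_{S^q}^q\le \frac{\|f\|_{L^q}^q}{4\pi}\sum_{k\ge 1}(2k+1)|g(k(k+1))|^q,\qquad q\ge 2,$$
via Araki--Lieb--Thirring together with the Funk--Hecke/spherical-harmonic expansion of the kernel of $g(-\Delta)$, and then to apply it with $g(\lambda)=(m^2+\lambda)^{-1/2}$ and $q=2p'$. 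The remaining spectral sum is controlled by the monotone comparison
$$\sum_{k\ge 1}\frac{2k+1}{(m^2+k(k+1))^{p'}}\le \int_0^\infty\frac{du}{(m^2+u)^{p'}}=\frac{m^{2-2p'}}{p'-1},$$
obtained via the substitution $u=k(k+1)$, $du=(2k+1)\,dk$.

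The main obstacle is precisely this sphere-version of KSS with the sharp Euclidean constant $1/(4\pi)$: getting this constant exactly, rather than only order-sharp, is what reproduces the Moser--Trudinger-type scaling $(p-1)/(4\pi)$ in $\mathrm{B}_p$. Assuming this step, the algebraic simplifications $p'-1=1/(p-1)$ and $1/p'=(p-1)/p$ give
$$\|K_V\|_{S^{p'}}\le \Bigl(\tfrac{p-1}{4\pi}\Bigr)^{(p-1)/p}m^{-2/p}\|V\|_{L^{p'}},$$
which combined with $\Tr(PK_V)\le n^{1/p}\|K_V\|_{S^{p'}}$ and the supremum over $V$ delivers \eqref{Liebd2} with $\mathrm{B}_p$ as in \eqref{Liebbp}.
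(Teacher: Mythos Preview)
Your overall strategy coincides with the paper's: dualize, set $\psi_j=(m^2-\Delta)^{1/2}\varphi_j$, apply Araki--Lieb--Thirring to reduce $\Tr\mathbf K^{p'}$ to $\Tr\bigl(V^{p'}(m^2-\Delta)^{-p'}\Pi\bigr)$, and evaluate the latter via spherical harmonics. You have, however, misidentified the ``main obstacle''. The sharp constant $1/(4\pi)$ in your sphere--KSS inequality is not the hard part at all: it drops out immediately from the addition formula $\sum_{k=1}^{2n+1}Y_n^k(s)^2=(2n+1)/(4\pi)$, which is exactly how the paper computes the trace.

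The genuine gap is the step you dismiss as a ``monotone comparison''. The summand $h(x)=(2x+1)(m^2+x^2+x)^{-p'}$ is \emph{not} monotone on $[0,\infty)$: one has $h'(0)=(2m^2-p')m^{-2p'}>0$ whenever $2m^2>p'$, so $h$ first increases and the integral test does not apply. Equivalently, after your substitution $u=k(k+1)$ the sum is a weighted sum $\sum_k(2k+1)g(u_k)$ whose weights $2k+1$ exceed the left increments $u_k-u_{k-1}=2k$; a direct check (e.g.\ $p'=2$, $m=10$, $k=1$) shows that the termwise bound $(2k+1)g(u_k)\le\int_{u_{k-1}}^{u_k}g(u)\,du$ fails. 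The inequality
\[
(p'-1)\,m^{2(p'-1)}\sum_{k\ge1}\frac{2k+1}{(m^2+k(k+1))^{p'}}<1
\]
is true, but it is precisely Proposition~\ref{main2}, whose proof occupies all of Section~\ref{Sec:3}: it uses the Euler--Maclaurin formula with an explicit fourth-order remainder bound, a case split $p'\in(1,2]$ versus $p'>2$, and a monotonicity-in-$p'$ argument to close the remaining parameter window. This spectral-sum estimate is the main technical contribution of the paper, not a routine integral comparison.
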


These inequalities were proved in the case of $\mathbb{R}^d$
in~\cite{LiebJFA} for $p=\infty$ ($d=1$),  $1\le p<\infty$ ($d=2$),
and for the critical $p=d/(d-2)$ ($d\ge3$). No expressions for the
constants were given, the dependence on $m$ is again uniquely defined by
scaling, and the main interest there was in the dependence of the
right hand side on $n$.

For $p=2$ this  inequality has played an essential role in finding
explicit optimal bounds for the attractor dimension for the damped
regularized Euler--Bardina--Voight system for various boundary
conditions both in the two and three dimensional cases, see
\cite{IZLap70,IKZPhysD,MZ}. More precisely, it was shown in \cite{IZLap70,MZ} that
$\mathrm{B}_2\le(4\pi)^{-1/2}$  for $\mathbb{T}^2$,
$\mathbb{S}^2$, and $\mathbb{R}^2$ based on  the following two inequalities for the lattice
sum over $\mathbb{Z}^2_0=\mathbb{Z}^2\setminus\{0,0\}$ and the
series with respect to the spectrum of the Laplacian on
$\mathbb{S}^2$ that were proved there for the special case, when  $p=2$
\begin{eqnarray}
  J_p(m):= \frac{(p-1)m^{2(p-1)}}\pi\sum_{n\in{\mathbb Z}_0^2}\frac1{(m^2+|n|^2)^p}<1, \label{T2} \\
   I_p(m):=m^{2(p-1)}(p-1)\sum_{n=1}^\infty\frac{2n+1}{\bigl(m^2+n(n+1)\bigr)^p}<1. \label{S2}
\end{eqnarray}

The case $p=2$ is not at all specific in the general scheme of the
proof of Theorem~\ref{Th:intro} and the general case
in the theorem both for $\mathbb{T}^2$ and $\mathbb{S}^2$
 immediately follows once we have inequality~\eqref{T2}, \eqref{S2}
 for all $1< p<\infty$.

 Inequality \eqref{T2} and therefore Theorem~\ref{Th:intro}
 for the torus $\mathbb{T}^2$ has recently been proved
 in~\cite{Lieb90arxiv}, and  the main result  of this work is
 the proof of \eqref{S2} and Theorem~\ref{Th:intro} for the sphere.

We point out that in the case of $\mathbb{R}^2$, instead of
\eqref{T2} and \eqref{S2} we simply have the equality
\begin{equation}\label{R2eq}
\frac{(p-1)m^{2(p-1)}}\pi\int_{\mathbb {R}^2}\frac{dx}{(m^2+|x|^2)^p}=1.
\end{equation}

For one function ($n=1$) Theorem~\ref{Th:intro} is equivalent to
the Sobolev inequality with parameter $H^1\hookrightarrow L^q$,
$q=2p\in [2,\infty)$,  which can equivalently be
written as a Gagliardo--Nirenberg inequality
\begin{equation}\label{2}
\|f\|_{L^q}\le\left(\frac{1}{4\pi}\right)^{(q-2)/2q}
 \left(\frac q2\right)^{1/2}\|f\|^{2/q}\|\nabla f\|^{1-2/q},
\end{equation}
which holds for $\mathbb{R}^2$, $\mathbb{T}^2$  and $\mathbb{S}^2$,
see Corollary~\ref{Cor:GN}.

For the torus $\mathbb{T}^2$ inequality \eqref{2} can be proved
in a direct way \cite{Lieb90arxiv} by using the Hausdorff--Young inequality
for the discrete Fourier series and again estimate~\eqref{T2}. In the case of $\mathbb{R}^2$
this approach is well known and with the additional use of the Babenko--Beckner
inequality \cite{Babenko, Beckner_Fourier} for the Fourier transform (and equality~\eqref{R2eq})
gives the following improvement of inequality \eqref{2}
for $\mathbb{R}^2$ with the best to date closed form  estimate for the constant \cite{Nasibov}:
\begin{equation}\label{Gag-Nir-R2}
\|\varphi\|_{L^q(\mathbb{R}^2)}\le\left(\frac1{4\pi}\right)^\frac{q-2}{2q}
\frac{q^{(q-2)/q}}{(q-1)^{(q-1)/q}}
\left(\frac q2\right)^{1/2}
\|\varphi\|^{2/q}\|\nabla\varphi\|^{1-2/q},\ q\ge2,
\end{equation}
see also \cite[Theorem 8.5]{Lieb--Loss} where the  equivalent result is obtained
for the inequality in the additive form.

Of course, inequality~\eqref{Gag-Nir-R2} for $\mathbb{R}^2$ and inequality~\eqref{Gag-Nir}
for $\mathbb{T}^2$ both  are a special case of Gagliardo--Nirenberg inequality.
For $\mathbb{R}^2$ the best constant is known for every $q\ge2$ and is expressed
in terms of a norm of the ground state solution
of the corresponding nonlinear Euler--Lagrange equation~\cite{Wein83}.
However, not in the explicit form. As mentioned above,
inequality~\eqref{Gag-Nir-R2} was  known before,
while  inequality \eqref{Gag-Nir}
(more precisely, the estimate for the constant in it)
for the torus  $\mathbb{T}^2$ was recently obtained in~\cite{Lieb90arxiv}.

As far as the case of the sphere $\mathbb{S}^2$ is concerned we do not know
how to prove \eqref{2} in a  way  other than the one function corollary
of the general Theorem~\ref{Th:intro}. The main difference from the case
of $\mathbb{T}^2$ is that the orthonormal spherical functions are
not uniformly bounded in $L^\infty$.

Our approach  makes it possible to
prove similar inequalities in  the vector case. Namely, we show that for
 $u\in \mathbf{H}^1_0(\Omega)\cap\{\div u=0\}$
it holds
\begin{equation*}\label{2-vec}
\|u\|_{L_q(\mathbb{S}^2)}\le\left(\frac{1}{4\pi}\right)^{(q-2)/2q}
 \left(\frac q2\right)^{1/2}\|u\|^{2/q}\|\rot u\|^{1-2/q}.
\end{equation*}
Here $\Omega\subseteq\mathbb{S}^2$ is an arbitrary domain on
$\mathbb{S}^2$. This inequality looks very similar to \eqref{2},
the important difference being that, unlike the scalar case, the
vector Laplacian on $\mathbb{S}^2$ is positive definite, and we can freely use
the extension by zero.

 Finally, it is natural to compare inequalities \eqref{1} with $d=2$
 and \eqref{2} for functions with mean value zero. To do so we go over
to the natural measure
 on $\mathbb{S}^2$ in \eqref{1} and then use the Poincare inequality
$\|\varphi\|^2\le2^{-1}\|\nabla\varphi\|^2$ to obtain:
\begin{equation*}\label{3}
\aligned
\|\varphi\|_{L^q(\mathbb{S}^2)}\le\left(\frac{1}{4\pi}\right)^{(q-2)/2q}
\left(\frac{q-2}2\|\nabla\varphi\|^2+\|\varphi\|^2\right)^{1/2}\le\\\le
\left(\frac{1}{4\pi}\right)^{(q-2)/2q}
\left(\frac{q-1}2\right)^{1/2}\|\nabla\varphi\|,
\endaligned
\end{equation*}
while \eqref{2} gives
\begin{equation*}\label{4}
\|\varphi\|_{L_q(\mathbb{S}^2)}\le\left(\frac{1}{4\pi}\right)^{(q-2)/2q}
 \left(\frac q2\right)^{1/2}\frac1{2^{1/q}}\|\nabla \varphi\|.
\end{equation*}
The constant here is marginally smaller, since
$$
2^{-2/q}\le1-1/q, \quad q\ge2.
$$

Since  inequality~\eqref{1} turns into equality on  constants, this inequality
may not be sharp on the subspace of  zero mean functions on $\mathbb{S}^2$,
and the constant in~\eqref{2} is not sharp. However, looking at
\eqref{2} and \eqref{Gag-Nir-R2} for $\mathbb{T}^2$,  $\mathbb{S}^2$
and for $\mathbb{R}^2$, respectively, one can suggest that
that the \emph{sharp} constant here is
\begin{equation*}\label{cq}
\mathrm{c}_q\sim\left(\frac1{8\pi}\right)^{1/2}\,q^{1/2}\quad\text{as}\quad q\to\infty.
\end{equation*}

The expression  on the right-hand side here  curiously coincides
with  sharp constant in the Sobolev inequality for the limiting
exponent, see \cite{Talenti,LiebAnnals}:
$$
\|\varphi\|_{L^q({\mathbb{R}^d})}\le
\frac{\sqrt{q}}{d\sqrt{2\pi}}\left[\frac{\Gamma(d)}{\Gamma(d/2)}\right]^{1/d}
\|\nabla\varphi\|_{L^2({\mathbb{R}^d})}, \qquad\frac1q=\frac12-\frac1d,
$$
if we formally set  $d=2$.
Of course, this inequality does hold in $\mathbb{R}^2$, since  $d\ge3$ in it.

Theorem~\ref{Th:intro} and the similar result in the vector case are proved in the
next Section~\ref{Sec:2},
and the key estimate for the series \eqref{S2} is proved in Section~\ref{Sec:3}.

 \setcounter{equation}{0}
\section{Proof of the main result}\label{Sec:2}

\begin{proof}[Proof of Theorem~\ref{Th:intro}]
We  first
 recall the basic facts concerning the spectrum of the
scalar Laplace operator $\Delta=\div\nabla$ on the sphere
$\mathbb{S}^{2}$ (see, for instance, \cite{S-W}):
\begin{equation}\label{harmonics}
-\Delta Y_n^k=n(n+1) Y_n^k,\quad
k=1,\dots,2n+1,\quad n=0,1,2,\dots.
\end{equation}
Here the $Y_n^k$ are the orthonormal real-valued spherical
harmonics and each eigenvalue $\Lambda_n:=n(n+1)$ has multiplicity $2n+1$.

The following identity is essential in what
follows: for any $s\in\mathbb{S}^{2}$
\begin{equation}\label{identity}
\sum_{k=1}^{2n+1}Y_n^k(s)^2=\frac{2n+1}{4\pi}.
\end{equation}
Since inequality \eqref{Liebd2} with \eqref{Liebbp} clearly holds for $p=1$
we assume below that $1<p<\infty$. Let us define two operators
\begin{equation}\label{HH}
 \mathbb{H}= V^{1/2}(m^2-{\Delta})^{-1/2}\Pi,\quad
  \mathbb{H}^*=\Pi(m^2-{\Delta})^{-1/2}V^{1/2},
\end{equation}
 where $V\in L^p$, is a
non-negative scalar function  and $\Pi$ is the
projection onto the space of functions with mean value zero:
$$
\Pi\varphi=\varphi-\frac1{4\pi}\int_{\mathbb{S}^2}\varphi(s)d\sigma.
$$
 Then ${\bf K}=
\mathbb{H}^*\mathbb{H}$ is a  compact self-adjoint operator in
 ${L}^2({\mathbb{S}}^2)$  and for $r=p'=p/(p-1)\in(1,\infty)$
$$
\aligned
\Tr \mathbf{K}^r=\Tr\left(\Pi(m^2-{\Delta})^{-1/2}V(m^2-{\Delta})^{-1/2}\Pi\right)^r\le\\\le
\Tr\left(\Pi(m^2-{\Delta})^{-r/2}V^r(m^2-{\Delta})^{-r/2}\Pi\right)=\\=
\Tr\left(V^r(m^2-{\Delta})^{-r}\Pi\right),
\endaligned
$$
where we used
 the Araki--Lieb--Thirring inequality for traces \cite{Araki, LT, traceSimon}:
$$
\Tr(BA^2B)^p\le\Tr(B^pA^{2p}B^p),\quad p\ge1,
$$
and the cyclicity property of the trace together with the facts
that $\Pi$ commutes with the Laplacian and that $\Pi$ is a
projection: $\Pi^2=\Pi$. Using the basis of orthonormal
eigenfunctions of the Laplacian \eqref{harmonics} and identity \eqref{identity},
in view of the key estimate~\eqref{2.main} proved  below we
find that
$$
\aligned
\operatorname{Tr} \mathbf{K}^r\le
\operatorname{Tr}\left(V^r(m^2-{\Delta})^{-r}\Pi\right)\\=
\int_{\mathbb{S}^2}V(s)^r\sum_{n=1}^\infty\sum_{k=1}^{2n+1}
\frac1{(m^2+n(n+1))^r}Y_n^k(s)^2d\sigma\\
=
\frac1{4\pi}\sum_{n=1}^\infty\frac{2n+1}{\bigl(m^2+n(n+1)\bigr)^r}
\int_{\mathbb{S}^2}V(s)^rd\sigma\le
\frac1{4\pi}\frac{m^{-2(r-1)}}{{r-1}}\|V\|^r_{L^r}.
\endaligned
$$
We can now argue as in~\cite{LiebJFA}. We observe that
$$
\int_{\mathbb{S}^2}\rho(s)V(s)d\sigma=\sum_{i=1}^n\|\mathbb{H}\psi_i\|^2_{L^2},
$$
where
$$
\psi_j=(m^2-{\Delta})^{1/2}\varphi_j,\quad j=1,\dots,n.
$$
Next, in view of \eqref{orth-m} the $\psi_j$'s are
orthonormal in $L^2$
$$
\aligned
(\psi_i,\psi_j)&=((m^2-\Delta)^{1/2}\varphi_i,(m^2-\Delta)^{1/2}\varphi_j)=
(\varphi_i,(m^2-\Delta)\varphi_j)\\&=m^2(\varphi_i,\varphi_j)+
(\nabla\varphi_i,\nabla\varphi_j)=\delta_{ij},
\endaligned
$$
and in view of  the variational
principle
$$
\sum_{i=1}^n\|\mathbb{H}\psi_i\|^2_{L^2}=\sum_{i=1}^n(\mathbf{K}\psi_i,\psi_i)
\le\sum_{i=1}^n\lambda_i,
$$
where $\lambda_i>0$ are the eigenvalues of the
operator $\mathbf{K}$. Therefore
$$
\aligned
\int_{\mathbb{S}^2}\rho(s)V(s)d\sigma\le\sum_{i=1}^n\lambda_i\le
n^{1/p}\left(\Tr K^r\right)^{1/r}\le\\\le  n^{1/p}
\left(\frac{p-1}{4\pi m^{2/(p-1)}}\right)^{(p-1)/p}\|V\|_{L^{p/(p-1)}}=\\=
n^{1/p}m^{-2/p}\left(\frac{p-1}{4\pi}\right)^{(p-1)/p}\|V\|_{L^{p/(p-1)}}.
\endaligned
$$
Finally, setting $V(x)=\rho(x)^{p-1}$ we obtain~\eqref{Liebd2},
\eqref{Liebbp}.
\end{proof}

\begin{corollary}\label{Cor:GN}
The following interpolation inequality holds for $\varphi\in \dot{H}^1(\mathbb {S}^2)$:
\begin{equation}\label{Gag-Nir}
\|\varphi\|_{L^q(\mathbb{S}^2)}\le\left(\frac1{4\pi}\right)^\frac{q-2}{2q}\left(\frac q2\right)^{1/2}
\|\varphi\|^{2/q}\|\nabla\varphi\|^{1-2/q},\qquad q\ge2.
\end{equation}
\end{corollary}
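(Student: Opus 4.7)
The plan is to specialize Theorem~\ref{Th:intro} to the case $n=1$, write the resulting bound as a one-parameter family of inequalities (indexed by $m>0$), and then optimize the parameter $m$ to obtain a multiplicative Gagliardo--Nirenberg inequality from the additive-style statement of the theorem.

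First I would apply Theorem~\ref{Th:intro} with $n=1$. For any zero-mean $\varphi\in\dot H^1(\mathbb S^2)$, the normalized function $\varphi_1:=\varphi/\sqrt{m^2\|\varphi\|^2+\|\nabla\varphi\|^2}$ satisfies \eqref{orth-m}, and with $p=q/2$ the density $\rho=|\varphi_1|^2$ obeys $\|\rho\|_{L^{q/2}}=\|\varphi_1\|_{L^q}^2$. Unravelling the normalization, this gives
\begin{equation*}
\|\varphi\|_{L^q}^2\le\left(\frac{q-2}{8\pi}\right)^{(q-2)/q} m^{-4/q}\bigl(m^2\|\varphi\|^2+\|\nabla\varphi\|^2\bigr),\qquad q\ge 2.
\end{equation*}

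Next I would optimize the right-hand side over $m>0$. Writing the factor as $m^{2-4/q}\|\varphi\|^2+m^{-4/q}\|\nabla\varphi\|^2$ and differentiating with respect to $m$, the optimum is attained at
\begin{equation*}
m_\star^2=\frac{2\|\nabla\varphi\|^2}{(q-2)\|\varphi\|^2},
\end{equation*}
at which value $m_\star^2\|\varphi\|^2+\|\nabla\varphi\|^2=\frac{q}{q-2}\|\nabla\varphi\|^2$ and $m_\star^{-4/q}=\bigl((q-2)\|\varphi\|^2/(2\|\nabla\varphi\|^2)\bigr)^{2/q}$.

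The last step is a bookkeeping simplification. Substituting back, the powers of $q-2$ combine via $(q-2)^{(q-2)/q}\cdot(q-2)^{2/q}\cdot(q-2)^{-1}=1$ and the powers of $2$ combine via $2^{-(q-2)/q}\cdot 2^{-2/q}=1/2$, leaving
\begin{equation*}
\|\varphi\|_{L^q}^2\le\left(\frac{1}{4\pi}\right)^{(q-2)/q}\frac{q}{2}\,\|\varphi\|^{4/q}\|\nabla\varphi\|^{2-4/q},
\end{equation*}
and taking the square root yields \eqref{Gag-Nir}. There is no real obstacle here beyond this algebraic simplification; the content is entirely in Theorem~\ref{Th:intro}, and the Corollary is just the standard passage from additive-with-parameter to multiplicative form via optimization in $m$, which replaces the usual scaling argument that is unavailable on $\mathbb S^2$.
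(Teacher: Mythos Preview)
Your proof is correct and follows essentially the same route as the paper: specialize Theorem~\ref{Th:intro} to $n=1$ with $p=q/2$, then optimize over the parameter $m$ to pass from the additive form to the multiplicative Gagliardo--Nirenberg form. The paper carries out the same computation in the $p$-variable and records the optimized constant as $\mathrm{B}_p\,p/(p-1)^{(p-1)/p}=(4\pi)^{-(p-1)/p}p$, which is exactly your final expression after the substitution $p=q/2$.
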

\begin{proof}
 For $n=1$ inequality \eqref{Liebd2} goes over to
$$
\|\varphi\|_{L^{2p}}^2\le\mathrm{B}_p
\left(m^{2-2/p}\|\varphi\|^2+m^{-2/p}\|\nabla\varphi\|^2\right).
$$
Minimizing with respect  $m$ we obtain
$$
\aligned
\|\varphi\|_{L^{2p}}^2\le\mathrm{B}_p\frac p{(p-1)^{(p-1)/p}}
\|\varphi\|^{2/p}\|\nabla\varphi\|^{2-2/p}=\\=
\left(\frac1{4\pi}\right)^{(p-1)/p}\,p\,
\|\varphi\|^{2/p}\|\nabla\varphi\|^{2-2/p},
\endaligned
$$
which is~\eqref{Gag-Nir}.
\end{proof}
The inequality for $H^1$-orthonormal divergence free vector
functions  on $\mathbb{S}^2$ and the corresponding one function
interpolation inequality are   similar to the scalar
case.

\begin{theorem}
Let a family of vector functions
$\{u_j\}_{j=1}^n\in\mathbf{H}^1_0(\Omega)$,
$\Omega\subseteq\mathbb{S}^2$ with $\div u_j=0$ be orthonormal in
$\mathbf{H}^1$:
$$
m^2(u_i,u_j)+(\rot u_i,\rot u_j)=\delta_{ij}.
$$
Then for $1\le p<\infty$
$$
\rho(x):=\sum_{j=1}^n|u_j(x)|^2
$$
satisfies
$$
\|\rho\|_{L^p}\le\mathrm{B}_pm^{-2/p}n^{1/p},
$$
where
$$
\mathrm{B}_p\le\left(\frac{p-1}{4\pi}\right)^{(p-1)/p}.
$$
\end{theorem}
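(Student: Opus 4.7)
The plan is to mirror the proof of Theorem~\ref{Th:intro} step by step, replacing scalar spherical harmonics by their vector (divergence-free) analogues and exploiting the Hodge-theoretic fact that $H^1(\mathbb{S}^2,\mathbb{R})=0$, so the vector Laplacian is already strictly positive on divergence-free fields and no mean-zero projection is needed.

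First I would extend every $u_j$ by zero from $\Omega$ to the whole sphere. Since $u_j\in\mathbf{H}^1_0(\Omega)$ vanishes on $\partial\Omega$ in the trace sense, the extension lies in $\mathbf{H}^1(\mathbb{S}^2)$, remains divergence-free (the normal-trace compatibility $u_j\cdot\nu=0$ is automatic), and preserves $\|u_j\|$, $\|\rot u_j\|$ and $\rho$, so it suffices to work on $\mathbb{S}^2$. Let $\mathbf{P}$ be the Leray projection onto divergence-free vector fields and $\mathbf{A}=m^2-\Delta_{\mathrm{vec}}$ the associated positive self-adjoint Stokes-type operator; since the Hodge Laplacian respects the Helmholtz decomposition, $\mathbf{P}$ commutes with $\mathbf{A}$. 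In analogy with \eqref{HH}, for a nonnegative $V\in L^p$ set
$$
\mathbb{H}=V^{1/2}\mathbf{A}^{-1/2}\mathbf{P},\qquad\mathbb{H}^*=\mathbf{P}\mathbf{A}^{-1/2}V^{1/2},\qquad\mathbf{K}=\mathbb{H}^*\mathbb{H}.
$$
Araki--Lieb--Thirring together with cyclicity of the trace and $\mathbf{P}^2=\mathbf{P}$ give, for $r=p/(p-1)$,
$$
\Tr\mathbf{K}^r\le\Tr\bigl(V^r\mathbf{A}^{-r}\mathbf{P}\bigr).
$$

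An orthonormal eigenbasis of $-\Delta_{\mathrm{vec}}$ on divergence-free fields on $\mathbb{S}^2$ is $e_n^k=(n(n+1))^{-1/2}\nabla^{\perp}Y_n^k$, $n\ge 1$, $k=1,\dots,2n+1$, with eigenvalue $n(n+1)$. The pointwise identity that replaces \eqref{identity} is
$$
\sum_{k=1}^{2n+1}|e_n^k(s)|^2=\frac1{n(n+1)}\sum_{k=1}^{2n+1}|\nabla Y_n^k(s)|^2=\frac{2n+1}{4\pi},
$$
which follows from $SO(3)$-equivariance of the spectral projector (the left-hand side is a constant on $\mathbb{S}^2$) combined with integration, using $\int_{\mathbb{S}^2}|\nabla Y_n^k|^2\,d\sigma=n(n+1)$. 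Substituting into the trace bound yields
$$
\Tr\mathbf{K}^r\le\frac1{4\pi}\sum_{n=1}^\infty\frac{2n+1}{(m^2+n(n+1))^r}\int_{\mathbb{S}^2}V^r\,d\sigma\le\frac{m^{-2(r-1)}}{4\pi(r-1)}\|V\|_{L^r}^r,
$$
the last step being exactly the series estimate \eqref{S2} proved in Section~\ref{Sec:3}.

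The remainder is verbatim the scalar argument: setting $\psi_j=\mathbf{A}^{1/2}u_j$ converts the $\mathbf{H}^1$-orthonormality assumption into $(\psi_i,\psi_j)_{L^2}=\delta_{ij}$; the variational principle and H\"older's inequality then give
$$
\int_{\mathbb{S}^2}\rho V\,d\sigma=\sum_{i=1}^n\|\mathbb{H}\psi_i\|^2_{L^2}\le\sum_{i=1}^n\lambda_i\le n^{1/p}\bigl(\Tr\mathbf{K}^r\bigr)^{1/r},
$$
and the choice $V=\rho^{p-1}$ combined with $L^p$-duality yields the claim with the same constant $\mathrm{B}_p\le((p-1)/(4\pi))^{(p-1)/p}$. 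The only genuinely new ingredients beyond the scalar proof are the extension-by-zero step (immediate from $u_j|_{\partial\Omega}=0$) and the vector pointwise identity above; the main obstacle is really just verifying that identity, which is soft but does require either the $SO(3)$-symmetry argument or a direct addition-formula computation for vector spherical harmonics.
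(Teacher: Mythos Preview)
Your proposal is correct and follows essentially the same route as the paper: extension by zero to reduce to $\Omega=\mathbb{S}^2$, the operators $V^{1/2}(m^2-\mathbf{\Delta})^{-1/2}\mathbf{\Pi}$, the Araki--Lieb--Thirring trace inequality, the pointwise identity $\sum_{k=1}^{2n+1}|w_n^k(s)|^2=(2n+1)/(4\pi)$ for the divergence-free eigenbasis $w_n^k=(n(n+1))^{-1/2}\nabla^\perp Y_n^k$, and the series estimate~\eqref{S2}. The only cosmetic differences are that the paper handles the whole sphere first and the extension-by-zero last, and derives the vector pointwise identity via the elementary formula $\Delta\varphi^2=2\varphi\Delta\varphi+2|\nabla\varphi|^2$ applied to $\varphi=Y_n^k$ rather than via your $SO(3)$-equivariance argument; both derivations are equally short.
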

\begin{proof}
The case $p=2$ was treated in~\cite{MZ}. Once we now
have~\eqref{2.main}  for  all $1<p<\infty$ the
proof of the theorem is completely analogous.
To make the paper self contained we provide some details.

In the vector case  identity~\eqref{identity} is replaced by its
vector analogue \cite{ILMS}:
\begin{equation}\label{identity-vec}
\sum_{k=1}^{2n+1}|\nabla Y_n^k(s)|^2=n(n+1)\frac{2n+1}{4\pi}.
\end{equation}
 In fact, substituting $\varphi(s)=Y_n^k(s)$ into
the identity
$$
\Delta\varphi^2=2\varphi\Delta\varphi+2|\nabla\varphi|^2
$$
we sum the results over $k=1,\dots,2n+1$. In view of \eqref{identity}
the left-hand side vanishes and we obtain~\eqref{identity-vec}
since the $Y_n^k(s)$'s are the eigenfunctions corresponding to $n(n+1)$.

Next, by the vector Laplace operator acting on (tangent)
vector fields on $\mathbb{S}^2$ we mean  the Laplace--de Rham
operator $-d\delta-\delta d$ identifying $1$-forms and vectors.
Then for a two-dimensional manifold
 we have \cite{I93}
$$
\mathbf{\Delta} u=\nabla\div u-\rot\rot u,
$$
where the  operators $\nabla=\grad$ and $\div$ have the
conventional meaning. The operator $\rot$ of a vector $u$ is a
scalar  and for a scalar $\psi$, $\rot\psi$ is a vector: $\rot
u:=\div(u^\perp)$, $\rot\psi:=\nabla^\perp\psi$, where  in the
local frame $u^\perp=(u_2,-u_1)$, that is, $\pi/2$ clockwise rotation
of $u$ in the local tangent plane. Integrating by parts we obtain
\begin{equation*}\label{byparts}
(-\mathbf{\Delta} u,u)=\|\rot u\|^2_{L^2}+\|\div u\|^2_{L^2}.
\end{equation*}

Corresponding to the eigenvalue $\Lambda_n=n(n+1)$,
where $n=1,2,\dots$, there is a family of $2n+1$ orthonormal
vector-valued  eigenfunctions  $w_n^k(s)$ of the vector Laplacian
on the invariant space of divergence free
vector-functions, that is, the Stokes operator on $\mathbb{S}^2$
\begin{equation*}\label{bases}
\aligned
w_n^k(s)&=(n(n+1))^{-1/2}\,\nabla^\perp Y_n^k(s),\ -\mathbf{\Delta}w_n^k=n(n+1)w_n^k,\
\div w_n^k=0;
\endaligned
\end{equation*}
where $k=1,\dots,2n+1$,  and~\eqref{identity-vec} implies the
following identity:
\begin{equation}\label{id-vec}
\sum_{k=1}^{2n+1}|w_n^k(s)|^2=\frac{2n+1}{4\pi}.
\end{equation}
We finally observe that  $-\mathbf{\Delta}$ is strictly positive
$-\mathbf{\Delta}\ge \Lambda_1I=2I.$

Turning to the proof we  first consider the  whole sphere
$\Omega=\mathbb{S}^2$, and as in \eqref{HH} define two operators
$$
 \mathbb{H}= V^{1/2}(m^2-\mathbf{\Delta})^{-1/2}\mathbf{\Pi},\quad
  \mathbb{H}^*=\mathbf{\Pi}(m^2-\mathbf{\Delta})^{-1/2}V^{1/2},
$$
where $\mathbf{\Pi}$ is the orthogonal Helmholtz--Leray  projection onto the subspace
$\{u\in\mathbf{L}^2(\mathbb{S}^2),\ \div u=0\}$. From this point, using
\eqref{id-vec},
we can complete the proof as in the scalar case.

Finally, if $\Omega\varsubsetneq\mathbb{S}^2$ is a proper domain on $\mathbb{S}^2$,
we extend by zero  $u_j$ outside
$\Omega$ and denote the results by $\widetilde{u}_j$,
so that $\widetilde{u}_j\in {\bf H}^1(\mathbb{S}^2)$
and $\operatorname{div}\widetilde{u}_j=0$. We
further set
$\widetilde\rho(x):=\sum_{j=1}^n|\widetilde{u}_j(x)|^2$.
Then setting $\widetilde{\psi}_i:=(m^2-\mathbf{\Delta})^{1/2}\widetilde{u}_i$,
we see that the system $\{\widetilde\psi_j\}_{j=1}^n$ is orthonormal in
$\mathbf L^2(\mathbb{S}^2)$ and
$\operatorname{div}\widetilde\psi_j=0$.
Since clearly $\|\widetilde\rho\|_{ L^2(\mathbb{S}^2)}=\|\rho\|_{ L^2(\Omega)}$,
the proof  reduces to the case of the whole sphere
and therefore is complete.
\end{proof}

\begin{remark}
{\rm
 For $q=4$ inequality \eqref{Gag-Nir} is  the Ladyzhenskaya
inequality on the 2D sphere $\mathbb{S}^2$
$$
\|\varphi\|_{L^4}\le \mathrm{c}_\mathrm{Lad}\|\varphi\|^2\|\nabla\varphi\|^2
$$
 and gives the estimate
of the constant $\mathrm{c}_\mathrm{Lad}\le1/\pi$. However,
a recent estimate of it in~\cite{ILZ-JFA}
in the terms of the  Lieb--Thirring inequality is slightly better:
  $\mathrm{c}_\mathrm{Lad}\le3\pi/32$.
On the other hand, \eqref{Gag-Nir} works for all $q\ge2$ and
provides a simple expression for the constant.
}
\end{remark}
\begin{remark}\label{R:Orlitz}
{\rm
The rate of growth as $q\to\infty$ of the constant both in \eqref{Gag-Nir} and \eqref{Gag-Nir-R2},
namely $q^{1/2}$, is optimal in the power scale. If we had not imposed the zero mean condition
for the sphere, it
would have immediately followed from~\eqref{1} with $d=2$.

In the general case, if in \eqref{Gag-Nir} and \eqref{Gag-Nir-R2} the rate of growth  was less
than $1/2$, then the Sobolev space $H^1$ in two dimensions would have been embedded
into the Orlicz space with Orlicz function
$e^{t^{2+\varepsilon}}-1$, $\varepsilon>0$, which is impossible \cite{Tr}.

Furthermore, while for every fixed $q<\infty$ the constant
 in \eqref{Gag-Nir} and \eqref{Gag-Nir-R2} is not sharp, we think,
  as mentioned before, that the \emph{sharp} constant $\mathrm{c}_q$
  behaves like
 $$
 \frac{\mathrm{c}_q}{\sqrt{q}}\to \frac1{\sqrt{8\pi}}\quad \text{as} \quad q\to\infty.
 $$

}
\end{remark}

 \setcounter{equation}{0}
\section{Proof of  estimate~\eqref{S2}}\label{Sec:3}

\begin{proposition}\label{main2}
The following inequality holds for  $p>1$ and $m\ge0$
\begin{equation}\label{2.main}
I_p(m):=m^{2(p-1)}(p-1)\sum_{n=1}^\infty\frac{2n+1}{(m^2+n^2+n)^p}<1.
\end{equation}
\end{proposition}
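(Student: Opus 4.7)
The plan is to realise the series as a midpoint-rule approximation of a one-dimensional integral against the strictly convex weight $\phi(u):=(m^2+u)^{-p}$, and then apply Jensen's inequality intervalwise.

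The key observation is purely arithmetic: the eigenvalue $n(n+1)=n^2+n$ is precisely the midpoint of the interval
\[
I_n:=\bigl[n^2-\tfrac12,\,(n+1)^2-\tfrac12\bigr],
\]
whose length is $|I_n|=(n+1)^2-n^2=2n+1$, i.e.\ exactly the multiplicity factor appearing in the numerator. Moreover the intervals $\{I_n\}_{n\ge1}$ tile $[\tfrac12,\infty)$ without overlap. The weight $\phi(u)=(m^2+u)^{-p}$ is strictly convex on $[0,\infty)$ for $p>1$, since $\phi''(u)=p(p+1)(m^2+u)^{-p-2}>0$.

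Applying Jensen's inequality to $\phi$ against the uniform probability measure on $I_n$ yields
\[
\phi\bigl(n(n+1)\bigr)\le\frac{1}{|I_n|}\int_{I_n}\phi(u)\,du,\qquad\text{equivalently}\qquad \frac{2n+1}{(m^2+n(n+1))^p}\le\int_{I_n}\frac{du}{(m^2+u)^p}.
\]
Summing over $n\ge 1$ and using $\bigcup_n I_n=[\tfrac12,\infty)$ telescopes the right-hand side into a single integral:
\[
\sum_{n=1}^\infty\frac{2n+1}{(m^2+n(n+1))^p}\;\le\;\int_{1/2}^\infty\frac{du}{(m^2+u)^p}\;=\;\frac{1}{(p-1)(m^2+\tfrac12)^{p-1}}.
\]

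Multiplying by $(p-1)m^{2(p-1)}$ reduces the proposition to the elementary inequality
\[
\frac{m^{2(p-1)}}{(m^2+\tfrac12)^{p-1}}<1,
\]
which is strict for every $m\ge 0$ and $p>1$ because $t\mapsto t^{p-1}$ is strictly increasing; the degenerate case $m=0$ is immediate since then the prefactor $m^{2(p-1)}$ vanishes. I do not anticipate any significant obstacle: once the intervals $I_n$ with the matching length $2n+1$ and midpoint $n(n+1)$ are identified, convexity of $\phi$ and a telescoped integral do all the work, and the resulting bound is automatically strict.
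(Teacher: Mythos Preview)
Your proof is correct and is considerably more elegant than the paper's own argument. The observation that $n(n+1)$ is the midpoint of the interval $[n^2-\tfrac12,(n+1)^2-\tfrac12]$ of length $2n+1$, combined with the midpoint-rule version of Jensen's inequality for the strictly convex weight $(m^2+u)^{-p}$, reduces the whole proposition to the one-line estimate $\bigl(m^2/(m^2+\tfrac12)\bigr)^{p-1}<1$. Strictness is automatic because $\phi$ is strictly convex on each $I_n$ (so each Jensen step is strict), and the boundary case $m=0$ is handled by the vanishing prefactor.

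By contrast, the paper applies the Euler--Maclaurin summation formula to the series, computing $f_m(0)$, $f_m'(0)$, $f_m'''(0)$ and bounding $\int_0^\infty|f_m''''|$ explicitly to obtain an inequality valid only for $m$ exceeding a threshold $m_0(p)$; the remaining range is then covered by a second Euler--Maclaurin estimate on the tail $R(p)$, a Bernoulli-type reduction to a quadratic in $p$, a monotonicity-in-$p$ argument, and a case split $p\in(1,2]$ versus $p>2$ that even requires locating numerically a crossover point $p_*\approx 2.109$. Your approach bypasses all of this machinery and all case distinctions. The paper's route does yield somewhat finer asymptotic information, namely $I_p(m)=1-\tfrac{2(p-1)}{3m^2}+O(m^{-4})$ as $m\to\infty$; but for the bare inequality \eqref{2.main} your argument is both shorter and uniform in $p>1$ and $m\ge 0$, and in fact produces the explicit closed-form upper bound $I_p(m)\le\bigl(1+\tfrac{1}{2m^2}\bigr)^{1-p}$.
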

\begin{proof}
A  general argument  shows that inequality \eqref{S2} holds
for all sufficiently large $m$. In fact, we
 observe that we can write $I_p(m)$ in the form
$$
I_p(m)=\frac{p-1}{m^2}\sum_{n=1}^\infty (2n+1)g\left(\frac{n(n+1)}{m^2}\right),
\quad g(t)=\frac1{(1+t)^p}\,.
$$
The following asymptotic expansion as $m\to \infty$ holds for this type of series
(see~\cite[Lemma~3.5]{IZ})
$$
\aligned
I_p(m)=
(p-1)\left[\int_0^\infty g(t)dt-\frac1{m^2}\frac23 g(0)+O(m^{-4})\right]=\\=
1-\frac1{m^2}\frac{2(p-1)}3+O(m^{-4}).
\endaligned
$$
Therefore for a fixed $p>1$ there exists a sufficiently large
$m=m_p$ such that inequality \eqref{2.main} holds for
all $m\ge m_p$.

The proof that it holds for all $p>1$ and $m\ge0$ requires some specific work.
We will use the Euler--Maclaurin summation formula
(see, for example, \cite{Krylov}). Namely, we use the formula
\begin{equation}\label{Euler-MacLaurin}
\sum_{n=1}^\infty f(n)=\int_0^\infty f(x)\,dx-\frac12 f(0)-\frac1{12}f'(0)+\frac1{720}f'''(0)+R_4,
\end{equation}
with  remainder term
$$
R_4=-\frac1{4!}\int_0^\infty f^{''''}(x)B_4(x)dx,
$$
where $B_4(x)$ is the periodic Bernoulli polynomial. The remainder
term $R_4$ in this formula can be estimated as
\begin{equation}\label{R4}
|R_4|\le\frac{2\zeta(4)}{(2\pi)^4}\int_0^\infty|f''''(x)|\,dx=
\frac1{720}\int_0^\infty|f''''(x)|\,dx,
\end{equation}
where $\zeta(4)=\frac{\pi^4}{90}$ and $\zeta(s)$ is the Riemann zeta function.

We will use this formula for relatively big $m$ and
$$
f_m(x)=\frac{m^{2(p-1)}(p-1)(2x+1)}{(m^2+x^2+x)^p}.
$$
A straightforward calculation gives
$$
\aligned
&\int_0^\infty f_m(x)\,dx=1,\\
&f_m(0)=\frac{p-1}{m^2},\\
 &f_m'(0)=\frac{(p-1)(2m^2-p)}{m^4},\\
 &f_m'''(0)=-\frac{(p - 1)p(12m^4 - 12m^2p - 12m^2 + p^2 + 3p + 2)}{m^8}
\endaligned
$$
and
\begin{multline*}
f_m''''(x)=32p(p^2 - 1)m^{2p - 2}\(\frac{(x + 1/2)^5(p + 2)(p + 3)}{(m^2 + x^2 + x)^{p + 4}} -\right.\\-\left. \frac{5(x + 1/2)^3(p + 2)}{(m^2 + x^2 + x)^{p +3}} + \frac{15(x+1/2)}{4(m^2 + x^2 + x)^{p+2}}\).
\end{multline*}
We now change the sign of the second term in the above expression and set
\begin{multline*}
g(x)=32p(p^2 - 1)m^{2p - 2}\(\frac{(x + 1/2)^5(p + 2)(p + 3)}{(m^2 + x^2 + x)^{p + 4}} +\right.\\+\left. \frac{5(x + 1/2)^3(p + 2)}{(m^2 + x^2 + x)^{p +3}} + \frac{15(x+1/2)}{4(m^2 + x^2 + x)^{p+2}}\).
\end{multline*}
Then, obviously, $|f_m''''(x)|\le g(x)$ for all $x$. On the other hand, the integral of $g(x)$ can be computed explicitly (since $g(x)$ contains odd powers of $(x-1/2)$ in the numerators, hence the
corresponding antiderivatives are expressed in elementary functions):
$$
\int_0^\infty g(x)\,dx=\frac{p(p-1)\bigl(172m^4+28(p+1)m^2+p^2+3p+2\bigr)}{m^8}.
$$
Thus, the Euler--Maclaurin formula \eqref{Euler-MacLaurin} gives us
the estimate
\begin{equation}\label{E-M}
\aligned
I_p(m)<
1-\frac23(p-1)m^{-2}+\frac{11}{36}p(p-1)m^{-4}+\frac1{18}p(p^2-1)m^{-6}=\\=
1-\frac1{36}(p-1)m^{-6}\left(24m^4-11pm^2-2p(p+1)\right).
\endaligned
\end{equation}
Therefore
$$
I_p(m)<1,
$$
if $24m^4-11pm^2-2p(p+1)>0$, that is, if 
\begin{equation}\label{m0p}
m>\frac{\sqrt{3\sqrt{313p^2 + 192p} + 33p}}{12}=:m_0(p).
\end{equation}

We now consider two cases: $p\in(1,2]$ and $p>2$. So let $p\in(1,2]$.
The maximum value of $m_0(p)$ on  $p\in(1,2]$  is attained at $p=2$,
so we have proved the desired inequality~\eqref{2.main} for all $p\in (1,2]$ and
\begin{equation*}\label{m0}
m>m_0:=\frac{\sqrt{66 + 6\sqrt{409}}}{12}\approx1.1406.
\end{equation*}
Thus, we only need to verify the desired inequality for $m<m_0$.
We single out the first term in the series and
drop the the dependence on $m$ in the remaining terms. We obtain
\begin{equation}\label{Gmp0}
\aligned
I_p(m)=m^{2(p-1)}(p-1)\(\frac3{(m^2+2)^p}+\sum_{n=2}^\infty\frac{2n+1}{(m^2+n^2+n)^p}\)<\\<
m^{2(p-1)}(p-1)\(\frac3{(m^2+2)^p}+\sum_{n=2}^\infty\frac{2n+1}{(n^2+n)^p}\)=\\=
m^{2(p-1)}(p-1)\(\frac3{(m^2+2)^p}+ R(p)\)=:G(m,p),
\endaligned
\end{equation}
where
$$
R(p):=\sum_{n=2}^\infty\frac{2n+1}{(n^2+n)^p}\,.
$$

 To complete the proof, we only need to prove  the inequality
$$
G(m,p)<1
$$
for all $p\in[1,2]$ and all $m\in[0,m_0]$.

We again apply the Euler--Maclaurin formula to the series $R(p)$
(taking into account that the summation now starts with $n=2$).
Setting
$$
f(n):=\frac{2n+1}{(n^2+n)^p}
$$
we have
$$
\aligned
\int_1^\infty f(x)\,dx=
\frac{2^{1-p}}{p-1},\ \
f(1)=\frac3{2^p}, \ \
f'(1)=\frac1{2^p}\left(2-\frac{9p}2\right),
\endaligned
$$
and
$$
\aligned
f''''(n)=
\frac{(2n+1)p(p+1)}{(n^2+n)^{p+4}}
\biggl(
(16p^2-4)n^4+(32p^2-8)n^3+\\+(24p^2+20p+4)n^2
+(8p^2+20p+8)n+p^2+5p+6\biggr).
\endaligned
$$
Since clearly $f''''(n)>0$, it follows from~\eqref{R4} that the
last two terms in the Euler--Maclaurin formula add up to zero:
$$
\frac1{720}f'''(1)+R_4\le\frac1{720}\left(f'''(1)+\int_1^\infty f''''(x)\,dx\right)=0.
$$
Therefore
$$
R(p)\le\int_1^\infty f(x)-\frac12f(1)-\frac1{12}f'(1)=
\frac1{2^p(p-1)}\left(\frac{9p^2-49p+88}{24}\right).
$$
We substitute this  into the expression for $G(m,p)$ and
set $z:=m^2/2$. We further suppose that
$z\le1$.  Then, since $e^{-x}\le1/(1+x)$, $x\ge0$
and taking into account that $\ln z\le0$
we have
$$
z^{p-1}=e^{(p-1)\ln z}<\frac1{1-(p-1)\ln z}.
$$
Using this  and   the Bernoulli inequality $(1+z)^p>1+pz$, we
obtain
\begin{equation}\label{Gmp1}
\aligned
G(m,p)-1=\frac12z^{p-1}\left(\frac{3(p-1)}{(1+z)^p}+2^p(p-1)R(p)\right)-1<\\
\frac12\frac1{(1-(p-1)\ln z)}\left(\frac{3(p-1)}{1+pz}+\frac{9p^2-49p+88}{24}\right)-1=\\
\frac{p-1}{48(pz+1)(1-(p-1)\ln z)}\times\\
\biggl(9zp^2+(48 z\ln z-40 z+9)p
+48 \ln z+32\biggr)=:
A(z,p)\phi(z,p).
\endaligned
\end{equation}
For the  future reference we point out that inequality~\eqref{Gmp1}
holds for all $m\le\sqrt{2}$ (so that $z\le1$) and {\it all} $p>1$
and in this case $A(z,p)>0$. Therefore the sign of $G(m,p)-1$
coincides with that of the quadratic polynomial $\phi(z,p)$.
Furthermore, for a fixed $p$ the function $\phi(z,p)$ is  monotone
increasing with respect to $z$. In fact, since $p\ln z+1/z\ge
p(1-\ln p)$, we have for $p>1$
\begin{equation}\label{monotfi}
\partial_z\phi(z,p)=9p^2+8p+48\left(p\ln z+\frac1z\right)\ge
9p^2+56p-48p\ln p>0.
\end{equation}

Returning now to the case $p\in (1,2]$ we observe that
 for $z=m_0^2/2=0.6504<1$, $\ln z<0$ we have
$$
\phi(m_0^2/2,p)=5.854\,p^2-30.446\,p+11.358<0\quad \text{for}\quad p\in[1,2].
$$
Hence
$$
\phi(m^2/2,p)<0\quad\text{for all}\quad m\in[0,m_0] \quad\text{and}\quad p\in[1,2].
$$
This completes the proof of inequality~\eqref{2.main} for $p\in
(1,2]$.

We are now ready to verify inequality~\eqref{2.main} for $p>2$ as well.
The key idea here is to use the fact that $I_p(m)$ is monotone decreasing
with respect to $p$ for $0\le m\le m_1(p)$, where $m_1(p)$ is given below.
Indeed, let
$$
f(n)=f(m,n,p)=\frac{m^{2(p-1)}(p-1)(2n+1)}{(m^2+n^2+n)^p}.
$$
Then
$$
\partial_pf(n)=\frac{m^{2(p-1)}(2n+1)}{(m^2+n^2+n)^p}
\left(1+(p-1)\ln\frac{m^2}{m^2+n^2+n}\right),
$$
and we see that the derivative is negative for all $n\in\mathbb N$
if
$$
m<m_1(p):=\frac{\sqrt{2}}{\sqrt{e^{\frac1{p-1}}-1}}\,.
$$

Let now $p>2$ be fixed. Two cases are possible
$$
m_0(p)<m_1(p)\quad\text{and}\quad m_1(p)\le m_0(p).
$$
In the first case inequality \eqref{2.main} holds for all $m$, since
if $m>m_0(p)$, it holds in view of~\eqref{E-M}, \eqref{m0p},
while if $m<m_0(p)<m_1(p)$, it holds in view of the established monotonicity
with respect to $p$
and the fact that \eqref{2.main} holds for $p=2$.
\begin{figure}[htb]
\centerline{\psfig{file=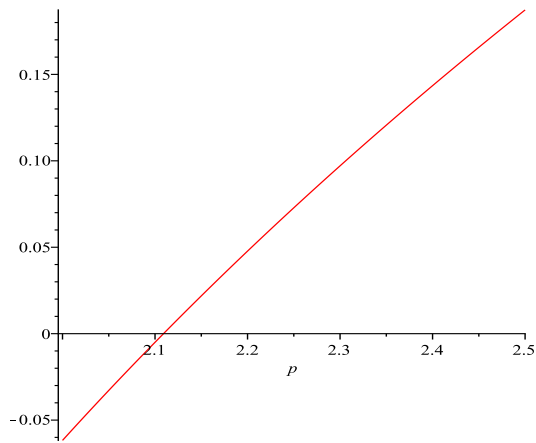,width=7.5cm,height=6cm,angle=0}}
\caption{The graph of $m_1(p)-m_0(p)$ on $p\in[2,2.5]$.}
\label{fig:T2}
\end{figure}

In the second case we first find the interval with respect to $p$ where
the inequality $m_1(p)\le m_0(p)$ actually holds. Namely, it holds for
$$
p\in[2,p_*],\quad p_*=2.10915\dots\,,
$$
see Fig.~\ref{fig:T2}, where the unique $p_*$ is found numerically.

Thus,  inequality~\eqref{2.main} holds  for $p>p_*$ and
we only need to look at the interval $p\in[2,p_*]$.
Furthermore, since $m_0(p)$ in \eqref{m0p} is monotone increasing,
we only need to check~\eqref{2.main} for
$$
p\in[2,p_*]\quad\text{and}\quad m\in[0,m_*],\ m_*=m_0(p_*)=1.169\,\dots.
$$
In view of \eqref{Gmp0}, \eqref{Gmp1}, \eqref{monotfi} and the
remark after \eqref{Gmp1} we have the following  sequence of
implications
$$
\aligned
\left\{I_p(m)<1\right\}\Leftarrow\left\{G(m,p)-1<0\right\}\Leftarrow
\left\{A(z,p)\phi(z,p)<0\right\}\Leftrightarrow\\
\left\{\phi(z,p)<0\right\}\Leftarrow\left\{\phi(z_*,p)<0\right\}\Leftrightarrow\\
\left\{6.1495p^2-30.8222p+13.7197<0,\ p\in[2,p_*]\right\}=\{'\text{true}'\},
\endaligned
$$
where $z=m^2/2\le z_*=m_*^2/2=0.6832<1$, and $m_*=1.169<\sqrt{2}$.
Inequality \eqref{2.main} is now proved for the whole range of parameters and
the proof is complete.
\end{proof}

\begin{remark}\label{R:MZ}
{\rm
The case $p=2$ important for applications was treated by more elementary means in
\cite{MZ}.
}
\end{remark}

\begin{remark}\label{R:monotone}
{\rm
Calculations show that for each $p$ tested, the function $I_p(m)$ is monotone
increasing with respect to $m$. We are not able to prove it
rigorously
at the moment. However, it was shown in \cite{Lieb90arxiv} that
the lattice sum $J_p(m)$ in \eqref{T2} is monotone increasing  in $m$,
which obviously implies inequality \eqref{T2}, since
$J_p(\infty)=1$.
}
\end{remark}

\end{document}